\newtheorem{corollary}{Corollary}
\newtheorem{theorem}{Theorem}
\newtheorem{proposition}{Proposition}
\newtheorem{lemma}{Lemma}
\newcommand{\be}{\begin{equation}}
\newcommand{\ee}{\end{equation}}
\newcommand{\bp}{\begin{proposition}}
\newcommand{\ep}{\end{proposition}}
\newcommand{\bt}{\begin{theorem}}
\newcommand{\et}{\end{theorem}}
\title{ON A PROJECTIVELY INVARIANT PSEUDO-DISTANCE IN FINSLER GEOMETRY}
\author{ Behroz BIDABAD\thanks{The corresponding author, bidabad@aut.ac.ir}, Maryam SEPASI}
\date{ }
\begin{document}
\maketitle
%%%%%%%%%%%%%%%%%%%%%%%%%%%%%%%
\vspace{-3in}
Int. Journal of Geometric Methods in Modern Physics, vol.12, 2015\\
\ \ \ doi; 10.1142/s0219887815500437
\vspace{3in}\\
%%%%%%%%%%%%%%%%%%%%%%%%%%%%%%%%%%%%%%
\begin{abstract}
Here, a non-linear analysis method is applied rather than classical one to study projective changes of  Finsler metrics.  More intuitively,  a projectively invariant pseudo-distance is introduced and characterized with respect to the Ricci tensor and its covariant derivatives.
\end{abstract}
\textbf{Keywords;} pseudo-distance; Finsler metric;   projective change;  Shwarzian derivative.\\
\textbf{AMS subject classification}: {53C60; 58B20}\\

\section{Introduction}
 In physics, a geodesic as a generalization of straight line represents the equation of motion which determines all  phenomena as well as  geometry of the space.
Two regular metrics  on a manifold are said to be pointwise projectively related if they have the same geodesics as the point sets. Much of the practical importance of two projectively related ambient spaces derives from the fact that they produce same physical events, see  \cite{AM}.  In projective geometry, the classical method is studying projectively invariant quantities and characteristics of a regular metric on a manifold and applying them to present new  projectively invariant quantities or characteristics. For instance, Weyl tensor is one of the most important projectively invariant quantities in Finsler spaces which makes the constant curvature characteristic of a Finsler space to be projectively invariant.

Recently, an endeavor   has been made by the present authors, to define a projectively invariant  symmetric pseudo-distance $d_M$ on a  Finsler space $(M,F)$, cf., \cite{BS,SB}.  Here,   a reasonably comprehensive account  of analysis based on the methods of Schwarzian derivative is used to find some conditions under which this pseudo-distance is a distance. More intuitively, let $\gamma:=x^i(t)$ be a geodesic on $(M,F)$. In general, the parameter ``$t$" of $\gamma$, does not remain invariant under projective changes. There is a  parameter ``$p$" which remains invariant under projective changes called  \emph{projective }parameter.
In Refs. \cite{B, T} the projective parameter is defined  for geodesics of general affine connections. In the present work, it is shown that in a Finsler space the parameter ``$p$"  is a  solution of the following ODE
\begin{equation}\label{propara}
\{p,s\}:= \frac{\frac{d^3p}{ds^3}}{\frac{dp}{ds}}-\frac{3}{2}\Big[\frac{\frac{d^2p}{ds^2}}{\frac{dp}{ds}}\Big]^2 =\frac{2}{n-1}Ric_{jk}\frac{d{x}^j}{ds}\frac{d{x}^k}{ds},
\end{equation}
where $\{p,s\}$ is  known  in the literature  as \emph{Schwarzian derivative} and  ``$s$" is the arc length parameter of $\gamma$.  The projective parameter is unique up to a linear fractional transformations, that is
 \begin{equation}\label{PropertyOfSchwarzian}
 \{ \frac{a p + b}{c p +d} , s \} = \{p , s\},
 \end{equation}
where, $ad-bc\neq 0$. When
the Ricci tensor is parallel with respect to any of Berwald, Chern or Cartan
connection, it is constant along the geodesics and we can easily solve the equation (\ref{propara}).
% and proved the following version of Schwarz's lemma.
%\begin{propriete}
%Let $(M,g)$ be a Riemannian space for which the  Ricci tensor satisfies
%$$(R_{jk})\leq- c^2(g_{jk}),$$
%as matrices, for a positive constant $c$. Then every projective map $f:I=]-1,+1[\rightarrow M$ satisfies
%$$f^{*}ds_{_M}^2 \leq \frac {n-1}{4c^2} d{s_{_I}}^2,$$
%where,  $ds_M^2 = \Sigma g_{jk}(x)dx^jdx^k$  and $d{s_{_I}}^2$ are the  first fundamental forms of $g$ and Poincar\'{e} metric on $I$.
%\end{propriete}
 With this objective in mind,  first a  Finslerian setting of the Schwarz' lemma is carried out  as follows;
\begin{theorem}\label{Th;1} %\textsf{(Schwarz' lemma)}
  Let $(M,F)$ be a connected Finsler space for which the Ricci tensor satisfies
  \be \label{shwarz condition}
 Ric_{ij} \leq -c^2g_{ij},
  \ee
 as matrices, for a  positive constant $c$.
  Then we have
  \begin{equation}\label{e26}
  {\tilde{f}}^*(ds_M^2) \leq \frac{n-1}{4c^2} ds_{_I}^2,
  \end{equation}
    where, $ds_{_I}$ and $ds_{_M}=\sqrt {g_{ij}(x , dx)dx^i dx^j}$ are the first fundamental  forms of the Poincar\'{e} metric on $I$
    and  the Finsler metric $F$ on $M$ respectively, and  $\tilde{f}$ is the natural lift of an arbitrary projective map ${f}$ on $TM$.
   \end{theorem}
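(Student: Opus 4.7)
The plan is to adapt the classical Ahlfors-Schwarz lemma to the Finsler setting by exploiting the ODE \eqref{propara} for the projective parameter and the linear-fractional invariance \eqref{PropertyOfSchwarzian}. The central idea is to translate the Ricci bound into a one-sided Schwarzian inequality along each image geodesic, transfer it to the domain $I$ via the projectivity of $f$, and read the resulting differential inequality as a pointwise bound on $ds/du$, where $s$ is the $F$-arc length of the image and $u$ the Poincar\'e arc length on $I$.

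To implement this, I would fix a Poincar\'e-geodesic $\sigma\subset I$ parametrized by its Poincar\'e arc length $u$, set $\gamma:=f\circ\sigma$, and note that projectivity of $f$ forces $\gamma$ to be an unparametrized $F$-geodesic; let $s=s(u)$ denote its $F$-arc length, so that $\tilde f^{*}(ds_{M}^{2})=(ds/du)^{2}\,ds_{I}^{2}$ along $\sigma$. The unit-speed condition $g_{ij}\dot x^{i}\dot x^{j}=1$ together with $Ric_{ij}\le -c^{2}g_{ij}$ yields $Ric_{jk}\dot x^{j}\dot x^{k}\le -c^{2}$, and \eqref{propara} then gives, for any projective parameter $p$ of $\gamma$,
\[
\{p,s\}\le -\frac{2c^{2}}{n-1}.
\]
On the domain side, a direct Schwarzian computation for the Poincar\'e metric in its standard normalization yields $\{q,u\}=-1/2$ for an appropriate projective parameter $q$. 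Projectivity of $f$ makes $f^{*}F$ and the Poincar\'e metric projectively related on $I$, so $p$ and $q$ differ by a linear fractional map, and \eqref{PropertyOfSchwarzian} gives $\{p,u\}=\{q,u\}$. The Schwarzian chain rule
\[
\{p,u\}=\{p,s\}\Bigl(\frac{ds}{du}\Bigr)^{2}+\{s,u\}
\]
then converts the Ricci bound into
\[
\{s,u\}\ge -\frac{1}{2}+\frac{2c^{2}}{n-1}\Bigl(\frac{ds}{du}\Bigr)^{2}.
\]

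The last step is to extract $(ds/du)^{2}\le (n-1)/(4c^{2})$ from this inequality. At an interior maximum of $\varphi(u):=(ds/du)^{2}$, the vanishing of $\varphi'$ forces $s''=0$, and the non-positivity of $\varphi''$ forces $\{s,u\}\le 0$; plugging these into the previous display yields the bound, which propagates to \eqref{e26} because every tangent vector of $I$ is the velocity of some Poincar\'e geodesic. The main obstacle is the existence of such a maximum: since $I$ is noncompact, $\varphi$ need not attain its supremum, and one must either exhaust $I$ by compact sub-intervals and pass to the limit, or invoke an Omori-Yau type generalized maximum principle in the spirit of the Ahlfors-Yau proof. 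A secondary conceptual difficulty is justifying the implication ``$f$ projective $\Longrightarrow f^{*}F$ projectively related to the Poincar\'e metric on $I$'' in the Finsler category, which is precisely why the statement is phrased in terms of the natural lift $\tilde f$ to $TM$: the direction-dependence of $F$ must be tracked throughout the argument.
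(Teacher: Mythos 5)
Your proposal is correct and is essentially the paper's own argument: both run an Ahlfors--Schwarz-type interior maximum principle on $h=\tilde f^{*}(ds_{M})/ds_{I}$, convert the Ricci bound into $\{p,s\}\le -2c^{2}/(n-1)$ via the projective-parameter ODE, combine it with the Schwarzian chain rule and the second-derivative test at the maximum, and handle non-attainment of the supremum by exhausting $I$ with sub-intervals carrying rescaled Poincar\'e metrics. The only difference is cosmetic (you work in Poincar\'e arc length with $\{q,u\}=-1/2$, the paper in the natural coordinate of $I$ with the explicit $-2/(1-u^{2})^{2}$ term), and your flagged ``obstacle'' is resolved exactly by the $I_{r}$ exhaustion you propose.
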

 Next, the  Showarz' lemma is used to prove the following theorem.
 \bt \label{Th;2}
  Let $(M,F)$ be a connected Finsler space for which the Ricci tensor satisfies
  \begin{equation*}%\label{e25}
  (Ric)_{ij} \leq -c^2g_{ij},
  \end{equation*}
 as matrices, for a positive constant $c$. Then the pseudo-distance $d_M$, is a distance.
  \et
Finally, we will  characterize the pseudo-distance  with respect to the Ricci tensor and its covariant derivative. More intuitively,
%\bt\label{Th;3}
%Let $(M,F)$ be a connected complete Finsler space of positive semi-definite  Ricci tensor. Then the intrinsic projectively invariant pseudo-distance is trivial, that is $d_M=0$.
%\et
%\bt\label{Th;4}
%Let $(M,F)$ be a connected (complete) Finsler space of negative-definite Ricci tensor  parallel with respect to  Berwald or Chern connection. Then  the intrinsic projectively invariant pseudo-distance $d_M$, is a (complete) distance.
%\et
 These Theorems generalize some results in Riemannian spaces of Kobayashi  to Finsler spaces, cf.,  \cite{K}.
    Here we use notations of \cite{BCS,A}. Without pretending to be  exhaustive we quote some more significant works in projective geometry,  \cite{ Sh, ChSh,  RR, NT,  ZR}.
 \section{Preliminaries }
%Let $M$ be an $n$- dimensional  $C^{\infty}$ manifold, $(x , U)$ a local chart on $M$, and $TM$ the bundle of tangent spaces $TM:={\cup}_{x\in M} T_xM$.  Elements of $TM$ are denoted by  $(x,y)$ and called line element, where $x\in M$ and $y\in T_xM$. The natural projection $\pi:TM\rightarrow M $, is given by $\pi (x,y):= x$.

 %The pull-back tangent bundle ${\pi}^{*} TM = \{(x,y,v) \mid y\in T_xM, v \in T_xM\}$ is a vector bundle over the slit tangent bundle $TM_0:=TM\backslash 0$ where the fiber $\pi^*_v TM$ at $v \in TM_0$ is just $T_xM$, whit $\pi (v) = x$.
A (globally defined) Finsler structure on a differential manifold $M$ is a function $F: TM\rightarrow [0 , \infty) $ with the  properties, i) Regularity: $F$ is $C^{\infty}$ on the entire slit tangent bundle $TM_0$,
ii) Positive homogeneity:  $F(x , \lambda y) = \lambda F(x , y)$ for all $\lambda > 0$,
iii) Strong convexity: The Hessian matrix $(g_{ij}) := ({[1/2F^2]}_{y^iy^j})$ is positive-definite at every point of $TM_0$. The pair $(M,F)$ is known as a Finsler space.
%We obtain a symmetric tensor  $A=A_{ijk}dx^i \otimes dx^j \otimes dx^k$, where $A_{ijk}:=\frac{F}{4}\frac{\partial{g_{ij}}}{\partial{}y^k}$. $A$ is called the \emph{Cartan} tensor.
%${A^i}_{jk}:=g^{ih}A_{hjk}$
Here and every where in this work the differential  manifold $M$ is supposed to be connected.
Every Finsler structure $F$ induces a spray $\textbf{G}=y^i\frac{\partial}{\partial x^i}-G^i(x,y)\frac{\partial}{\partial y^i}$ on $TM$, where $G^i(x,y):=\frac{1}{2} g^{il}\{[F^2]_{x^k y^l}y^k-[F^2]_{x^l}\}$. $\textbf{G}$ is a globally defined vector field on $TM$.  Projection of a flow line of $\textbf{G}$ on $M$
is called a geodesic . Differential equation of a geodesic in local coordinate is given by $\frac{d^2x^i}{ds^2}+G^i(x(s),\frac{dx}{ds})=0$, where $s(t) = \int_{t_0}^{t} F(\gamma , \frac{d\gamma}{dr}) dr$ is the arc length parameter.  For $x_0$ , $x_1$ $\in M$, denote by $\Gamma (x_0 , x_1)$ the collection of all piecewise $C^{\infty}$ curves  $\gamma : [a , b] \rightarrow M$ with $\gamma (a) = x_0$ and $\gamma(b) = x_1$. Define a map $d_F : M \times M \rightarrow [0 , \infty)$ by
\be\label{distance}
d_F(x_0 , x_1) := \inf L(\alpha),  \quad \alpha \in \Gamma (x_0 , x_1).
\ee
It can be shown that $d_F$ satisfies the first two axioms of a metric space. Namely,
\begin{itemize}
\item[(1)] $d_F(x_0 , x_1) \geq 0$ , where equality holds if and only if $x_0 = x_1$,
\item[(2)] $d_F(x_0 , x_1) \leq d_F(x_0 , x_1) + d_F(x_1 , x_2)$.
\end{itemize}
We should remark that the distance function $d_F$ on a Finsler space does not have  the symmetry property.
If the Finsler structure F is absolutely homogeneous, that is $F(x,\lambda y)=\mid \lambda \mid F(x,y)$ for $\lambda \in \mathbb{R}$, then one also has
\begin{itemize}
\item[(3)] $d_F(x_0, x_1) = d_F(x_1,x_0)$.
\end{itemize}
For a  non null $y \in T_xM$, the Riemann curvature
$\textbf{R}_y : T_xM \rightarrow T_xM$ is defined by $\textbf{R}_y(u)=R^i_k u^k \frac{\partial}{\partial x^i}$, where $R^i_k(y):=\frac{\partial G^i}{\partial x^k}-1/2\frac{{\partial}^2G^i}{\partial y^k \partial x^j}y^j+G^j \frac{{\partial}^2G^i}{\partial y^k \partial y^j}-1/2\frac{\partial G^i}{\partial y^j}\frac{\partial G^j}{\partial y^k}$.
The \emph{Ricci Scalar} is defined by $Ric:= {R^i}_i$, cf. \cite{BCS}.
 In the present work, we use the definition of  \emph{Ricci tensor}
introduced  by Akbar-Zadeh,  as follows  $Ric_{ik} :=\frac{1}{2}(F^2Ric)_{y^iy^k}$. cf., \cite{A}.
Let ${G^i}_j :=1/2 \frac{\partial G^i}{\partial y^j}$, $l^i:=\frac{y^i}{F}$, and  ${\widehat{l}}:=l^i\frac{\delta}{\delta x^i}=l^i( \frac{\partial}{\partial x^i}-{G^k}_i \frac{\partial}{\partial y^k})$.
By homogeneity we have $Ric_{ik} {\ell}^i{\ell}^k = Ric$. Let $\bar{F}$ be another Finsler structure on M. If any geodesic of $(M,F)$  coincides with a geodesic of
$ (M,\bar{F})$ as set of points and vice versa, then the change  $F\rightarrow \bar{F}$ of the metric is called \emph{projective } and $F$ is said to be \emph{projective} to $\bar{F}$.
  A Finsler space $(M,F)$  is projective to another Finsler space $(M,\bar{F})$, if and only if there exists a  1-homogeneous scalar field $P(x,y)$ satisfying
   \begin{equation}\label{e4}
    \bar{G}^i(x , y)=G^i(x , y)+P(x,y)y^i.
    \end{equation}
 The scalar field $P(x , y)$ is called the \emph{projective factor} of the projective change under consideration.

 It can be easily shown
 \begin{equation}\label{e6}
2F^2 {R^i}_k=2 (G^i)_{x^k}-\frac{1}{2}(G^i)_{y^j}(G^j)_{y^k}-y^j(G^i)_{y^kx^j}+G^j(G^i)_{y^ky^j}.
  \end{equation}
See Ref. \cite[ P.71]{BCS}. From (\ref{e6}) one  obtains
\be\label{e9}
2F^2 Ric = 2 (G^i)_{x^i} -\frac{1}{2} (G^i)_{y^j}(G^j)_{y^i}- y^j(G^i)_{y^ix^j} + G^j(G^i)_{y^iy^j}.
\end{equation}
Under the projective change (\ref{e4}) we have
\begin{equation}\label{e10}
\bar{F}^2\bar{Ric} = F^2Ric + \frac{(n-1)}{2}(\frac{\partial P}{\partial x^i} y^i - \frac{\partial P}{\partial y^i} G^i + \frac{P^2}{2}).
\end{equation}
 Now we are in a position to define in the next section  the projective parameter of a geodesic on a Finsler space.
\section{Projective invariant parameter on  Finsler spaces}
\subsection{Projective parameter}
In \cite{B}, Berwald has defined the projective parameter  for geodesics of general affine connections as a parameter which is  projectively invariant. He introduced the notion of a \emph{general affine connection $\Gamma$} on an $n$-dimensional manifold $M$, as a geometric object  with components ${\Gamma}^i_{jk}(x , \dot{x})$, 1-homogeneous in $\dot{x}$.  These geometric objects transform by the local change of  coordinates
\be\label{coordinatesChange}
{\bar{x}}^i = {\tilde{x}}^i(x^1,...,x^n),
\ee
as ${\tilde{\Gamma}}^i_{jk} = ({\Gamma}^l_{mr} \frac{\partial x^m}{\partial{\tilde{x}}^j} \frac{\partial x^r}{{\tilde{x}}^k} + \frac{{\partial}^2 x^l}{\partial{\tilde{x}}^j \partial{\tilde{x}}^k}) \frac {\partial {\tilde{x}}^i}{\partial x^l}$,
wherever $\dot{x}$ are transformed like the components of a contravariant vector. These specifications are carefully spelled out for geodesics of Finsler metrics in the following natural sense.
First recall that for a non-constant $C^\infty$ real function $f$ on $\mathbb{R}$, and for $t \in \mathbb{R}$, the \emph{Schwarzian derivative}
$\{f , t\}: = \frac{\frac{d^3f}{dt^3}}{\frac{df}{dt}}-\frac{3}{2}\Big[\frac{\frac{d^2f}{dt^2}}{\frac{df}{dt}}\Big]^2$,
 is defined to be an operator which is invariant under all linear fractional transformations
 $t \rightarrow \frac{a t + b}{c t +d}$ where, $ad - bc \neq 0$. That is,
 \be\label{PropertyOfSchwarzian}
 \{ \frac{a f + b}{c f +d} , t \} = \{f , t\}.
 \ee
Let $g$ be a real function for which the composition $f\circ g$ is defined. Then,
\be\label{schwarzian}
\{f\circ g,t\}=\{f,g(t)\}(\frac{dg}{dt})^2+\{g,t\}.
\ee
Let $\gamma$  be a geodesic on the Finsler space $(M , F)$. We need  a parameter $\pi$ which remains invariant  under both the coordinates change (\ref{coordinatesChange}),  and   the projective change (\ref{e4}).
We define the \emph{projective normal parameter} $\pi$ of   $\gamma$  by
\begin{equation}\label{e11}
\{\pi , s\} = -4 A G^0(x , \frac{dx}{ds}),
\end{equation}
where $\{\pi , s\}$ is the Schwarzian derivative,
 $A\neq0$ is a constant and
$ G^0(x , \dot{x})$  is a  homogeneous function of second degree in $\dot{x}$.
We require that the parameter $\pi$ remains invariant under the coordinates change (\ref{coordinatesChange}),  and   the projective change (\ref{e4}). This gives to the quantity $G^0$ the following   transformation laws,
 \begin{equation}\label{e12}
 \tilde{G}^0(\tilde{x} , \dot{\tilde{x}}) = G^0(x , \dot{x}), \qquad \dot{\tilde{x}}^i= \frac{\partial \tilde{x}^i}{\partial x^k} \dot{x}^k.
 \end{equation}
 %by coordinate transformations  and  projective change of (\ref{e4}) we have:
By projective change (\ref{e4}), we have
 \begin{equation}\label{e13}
\tilde{G}^0 = G^0 - \frac{1}{4A}(\frac{\partial P}{\partial x^i} \dot{x}^i - \frac{\partial P}{\partial \dot{x}^i} G^i + \frac{P^2}{2}).
\end{equation}
According to  (\ref{e10}) and (\ref{e13}),  the scalar $R^*$ defined by
 \begin{equation}\label{e14}
R^* := F^2Ric + 2A (n-1)G^0,
\end{equation}
 is 2-homogeneous in $\dot{x}^i$ and remains invariant under the  projective change (\ref{e4}).
If we put $R^* = 0$ then
 \begin{equation}\label{e15}
G^0 = -\frac{1}{2A(n-1)}F^2 Ric.
\end{equation}
Plugging the value of $G^0$ into  (\ref{e11}), we obtain
\begin{equation}\label{e16}
\{\pi,s\}=\frac{2}{n-1}F^2Ric = \frac{2}{n-1} Ric_{jk}\frac{d{x}^j}{ds}\frac{d{x}^k}{ds},
\end{equation}
which is called the  \emph{preferred projective normal parameter} up to linear fractional transformations.
In the sequel we will simply refer to preferred projective normal parameter as \emph{ projective parameter}.

 Let $(M,F)$ be projectively related to $(M,\bar F)$ and  the curve $\bar{x}(\bar{s})$ be a geodesic with affine parameter  $\bar{s}$ on $(M,\bar F)$  representing the same geodesic as $x(s)$  of $(M,F)$, except for its parametrization. Then, one can easily check that the projective parameter $\bar{\pi}$ defined by $\bar{x}(\bar{s})$ is related to the projective parameter $\pi$  by  $\bar{\pi} = \frac{a\pi+b}{c\pi+d}$.
 \subsection{Projective parameter for Ricci parallel Finsler spaces}
\bp\label{Pr;1}
Let $(M,F)$ be a Finsler space of parallel Ricci tensor. Then the Ricci tensor is constant along the geodesics parameterized by arc-length, and solutions of (\ref{e16}) are given as follows.
\begin{itemize}
\item[i)] If $\{p,s\}=c^2$ with $c>0$ then
\begin{equation}\label{p1}
p=\frac{\alpha cos(cs)+\beta sin(cs)}{\gamma cos(cs)+\delta sin(cs)}.
\end{equation}
\item[ii)] If $\{p,s\}=-c^2$ with $c>0$ then
\begin{equation}\label{p2}
p=\frac{\alpha e^{cs}+\beta e^{-cs}}{\gamma e^{cs}+\delta e^{-cs}}.
\end{equation}
\item[iii)] If $\{p,s\}=0$ then
\begin{equation}\label{p3}
p=\frac{\alpha+\beta s}{\gamma+\delta s}.
 \end{equation}
 \end{itemize}
 \ep
 \begin{proof}
  Let   Ricci tensor be parallel with respect to the Cartan connection. We denote the horizontal and vertical Cartan covariant derivatives of Ricci tensor by
   $\bigtriangledown^c_{\frac{\delta}{\delta x^k}}Ric_{ij}$ and $\bigtriangledown^c_\frac{\partial}{\partial y^k}Ric_{ij}$ respectively. We have
 \begin{equation}\label{c}
\bigtriangledown^c_{\frac{\delta}{\delta x^k}}Ric_{ij}=\frac{\delta Ric_{ij}}{\delta x^k}- Ric_{ir}{\Gamma^r}_{jk}-Ric_{jr}{\Gamma^r}_{ik}=0,
\ee
\begin{equation}\label{d}
\bigtriangledown^c_{\frac{\partial}{\partial y^k}}Ric_{ij}=\frac{\partial Ric_{ij}}{\partial y^k}- Ric_{ir}\frac{{A^r}_{jk}}{F}-Ric_{jr}\frac{{A^r}_{ik}}{F}=0,
\ee
where ${\Gamma^i}_{jk}=\frac{1}{2}g^{ih}(\frac{\delta g_{hj}}{\delta x^k}+\frac{\delta g_{kh}}{\delta x^j}-\frac{\delta g_{jk}}{\delta x^h})$  and ${A^i}_{jk}:=g^{ih}A_{hjk}=g^{ih}\frac{F}{4}\frac{\partial{g_{ij}}}{\partial y^k}$ are the components of  \emph{Cartan} tensor.
Consider  the geodesic $\gamma:=x^i(s)$, where $s$ is the arc-length parameter. Contracting (\ref{c}) by $\frac{dx^i}{ds}\frac{dx^j}{ds}\frac{dx^k}{ds}$ gives $$\frac{dx^i}{ds}\frac{dx^j}{ds}\frac{dx^k}{ds}\big [(\frac{\partial Ric_{ij}}{\partial x^k}-{G^l}_k\frac{\partial Ric_{ij}}{\partial y^l})-
(Ric_{ir}{\Gamma^r}_{jk})-(Ric_{jr}{\Gamma^r}_{ik}) \big ]=0.$$
Using (\ref{d}), and the property $y^j {A^i}_{jk}=y^k {A^i}_{jk}=0$ of Cartan tensor, we have
$$\frac{dx^i}{ds}\frac{dx^j}{ds}\big [\frac{dRic_{ij}}{ds}-\frac{dx^k}{ds}{G^l}_k(Ric_{ir}\frac{{A^r}_{jl}}{F}+Ric_{jr}\frac{{A^r}_{il}}{F})-2\frac{dx^k}{ds}Ric_{jr}{\Gamma^r}_{ik} \big] =0.$$
Therefore
$$\frac{dRic_{ij}\frac{dx^i}{ds}\frac{dx^j}{ds}}{ds}-2Ric_{ij}\frac{d^2x^i}{ds}\frac{dx^j}{ds}-0+2Ric_{rj}\frac{d^2x^r}{ds}\frac{dx^j}{ds}=0,$$
which leads to
\begin{equation}\label{e}
Ric_{ij}\frac{dx^i}{ds}\frac{dx^j}{ds}=constant.
\ee
Following the  method  just used, we can prove that if the Ricci tensor is parallel with respect to the Berwald or Chern connection then along  the geodesic $\gamma$ parameterized by arc-length,  we have $Ric_{ij}\frac{dx^i}{ds}\frac{dx^j}{ds}=constant$. By some direct calculations, one can show  the general solution of (\ref{e16}) is given by
\begin{equation}\label{b}
u(t)=\frac{\alpha y_1 + \beta y_2}{\gamma y_1 + \delta y_2},
\ee
where $\alpha \delta-\beta \gamma\neq 0$ and  $y_1$ and $y_2$ are linearly independent solutions of  the ordinary differential equation
\begin{equation}\label{a}
y^{''}+Q(s)y(s)=0,
\end{equation}
where $Q(s)=\frac{1}{n-1}Ric_{ij}\frac{dx^i}{ds}\frac{dx^j}{ds}.$
 Thus the equation (\ref{e16})  reduces to a second order ODE with constant coefficient and with respect to the sign of Ricci tensor, one can explicitly determine   a projective parameter $p$ as an elementary function of $s$ by (\ref{p1}), (\ref{p2}) and (\ref{p3}). This completes the proof.
 \end{proof}
\section{Intrinsic pseudo-distance }
Consider the open interval  $I=(-1,1)$ with Poincar\'e metric $ds^2_I=\frac{4du^2}{(1-u^2)^2}$. The Poincar\'e distance between two points $a$ and $b$ in $I$ is given by
\begin{equation}\label{e12}
\rho(a , b) = \mid \ln \frac{(1-a)(1+b)}{(1-b)(1+a)} \mid,
\end{equation}
cf.,  \cite{O}.
 A geodesic $f :I \rightarrow M$ on the Finsler space $(M,F)$ is said to be \emph{projective}, if the natural parameter $u$ on $I$ is a projective parameter.
We now come to the main step for definition of the pseudo-distance $d_M$, on $(M, F)$. To do so, we proceed in analogy with the treatment of Kobayashi in Riemannian geometry, cf., \cite{K}. Although he has confirmed that the construction of intrinsic pseudo-distance is valid for any manifold with an affine connection, or more generally a projective connection, cf., \cite{K1}, we restrict our consideration to  the pseudo-distances induced by the Finsler structure $F$ on a  connected manifold $M$.
%Consider the open interval $I = \{u \in \mathbb{R} \mid -1< u <1\}$, the Funk metric on $I$ is defined by
 %\begin{equation}\label{e17}
%L_f = \frac{1}{k} (\frac{\mid y \mid}{1-u^2} + \frac{uy}{1-u^2}).
 %\end{equation}
%Therefore, the related Funk distance for every two points $a$ and $b$ in $ I$ is given by
%\begin{equation}\label{e18}
%D_f(a , b)= \frac{1}{2k} (\mid \ln \frac{(1-a)(1+b)}{(1-b)(1+a)} \mid + \ln \frac{(1-a^2)}{(1-b^2)}).
%\end{equation}
%See \cite{O} for more details.
Given any two points $x$ and $y$ in $(M,F)$, we consider a chain $\alpha$ of geodesic segments joining these points. That is
\begin{itemize}
\item a chain of points $x = x_0 , x_1 , ... ,x_k = y$ on $M$;
\item pairs of points $a_1,b_1 ,..., a_k,b_k$ in $I$;
\item projective maps $f_1,...,f_k$, $f_i: I \rightarrow M $, such that
$$f_i(a_i) = x_{i-1}, \quad f_i(b_i) = x_i, \quad i = 1,...,k.$$
\end{itemize}
By virtue of the Poincar\'{e} distance $\rho(.,.)$ on $I$ we define the length $L(\alpha)$ of the chain $\alpha$ by
$L(\alpha):= \Sigma_i \rho(a_i , b_i)$, and we put
\begin{equation}\label{e19}
d_M(x , y):= inf L(\alpha),
\end{equation}
where the infimum is taken over all chains $\alpha$ of geodesic segments from $x$ to $y$.
\bp
Let $(M, F)$ be a Finsler space. Then for any points $x$, $y$, and $z$ in $M$,  $d_M$ satisfies
 \begin{itemize}
% \item[(i)]\  $d_M(x , y) \neq d_M(y , x)$,
\item[(i)] $d_M(x,y)=d_M(y,x)$.
\item [(ii)]$d_M(x , z) \leq d_M(x , y) + d_M(y , z)$.
\item [(iii)]If $x = y$ then $d_M(x , y) = 0$ but the inverse is not  always true.
 \end{itemize}
\ep
\begin{proof}
%(i)  The Funk distance, $D_f(a_i , b_i) \neq D_f(b_i , a_i)$ for $i=1,...,k$. Therefore  we  have the proof of (i).
 To prove $(i)$, consider the chain $\bar \alpha$ of geodesic segments consisting of; (1) a chain of points $y=\bar x_0 , \bar x_1 , ... , \bar x_k = x$ on $M$; (2) pairs of points $\bar a_1,\bar b_1 ,...,\bar a_k, \bar b_k$ in $I$, where $\bar a_i:= b_{k+1-i}$ and  $\bar b_i:= a_{k+1-i} \quad i = 1,...,k$ ; (3) projective maps $\bar f_1,...,\bar f_k$, $\bar f_i: I \rightarrow M $, where $\bar f_i:=f_{k+1-i} \quad i = 1,...,k$.  We have
$\bar f_i(\bar a_i) = f_{k+1-i}(b_{k+1-i})= x_{k+i-1}=\bar x_{i-1}$ and $\bar f_i(\bar b_i) = f_{k+1-i}(a_{k+1-i})= x_{k-i}=\bar x_i$. The length $L(\bar \alpha)$ of the chain $\bar \alpha$ by definition is $L(\bar \alpha) = \Sigma_i  \rho(\bar a_i ,\bar  b_i)= \Sigma_i  \rho(b_i , a_i)=L(\alpha)$. Therefore we get an elegant result reads  $d_M$ is symmetric, i.e. $d_M(x,y)=d_M(y,x)$.\\
 To prove $(ii)$,  it is enough to  show  for all positive $ \epsilon > 0$, the inequality  $ d_M(x , z) \leq d_M(x , y) + d_M(y , z) + \epsilon$, holds.
 There is a chain $\alpha_1$ joining the points $x$ and $y$ through the projective maps $f_i$, for $i=1,...,k_1$ and a chain $\alpha_2$ joining $y$ and $z$ through the projective maps $g_j$, for $j=1,...,k_2$ such that
  $$d_M(x , y) \leq L(\alpha_1) \leq d_M(x , y) +\epsilon /2,$$
  $$d_M(y , z) \leq L(\alpha_2) \leq d_M(y , z) +\epsilon /2.$$
  We define the chain $\alpha$ joining $x$ and $z$ through the projective maps $h_k$, for $k=1,...,k_1 + k_2$ such that
  $$h_k = f_k, \ \ \ k=1,...,k_1, \qquad \qquad $$
   $$ \quad h_k = g_{k - k_1}, \ \ \ k=k_1+1,...,k_1 + k_2.$$
   From which we conclude
   $$d_M(x , z) \leq L(\alpha) \leq L(\alpha_1) + L(\alpha_2) \leq d_M(x , y) + d_M(y , z) + \epsilon .$$
To show the assertion $(iii)$, we consider the Euclidian space $\mathbb{R}^2$. Let $A=(-1/2, 0)$ and $B=(0,1/2)$ be two points in $\mathbb{R}^2$. Here, we have $Ric(x,y)=0$, $x\in \mathbb{R}^2$ and $y\in T_x\mathbb{R}^2$. Thus $\pi(s)=s$ is a special solution of the ODE $\{\pi,s\}=0$. For $n\in \mathbb{N}$, let $\alpha_n$ be the chain of geodesic segments joining the points $A$ and $B$, with; 1) pairs of points $a_n=-1/2n$ and $b_n=1/2n$ in $I$, 2) projective map $f_n:I\rightarrow \mathbb{R}^2$, $f_n(\pi)=(n\pi,0)$. We have $f_n(-1/2n)=(-1/2,0)$ and $f_n(1/2n)=(1/2,0)$. Moreover $\rho(-1/2n,1/2n)=\mid\ln\frac{(1+1/2n)^2}{(1-1/2n)^2}\mid$. Considering $n$ sufficiently large, we have $d_M(A,B)=inf(L(\alpha))=0$. This completes the proof.
   \end{proof}
  %$$\ \ \ \ \ \ \ \ \ \ \ \ \  \ \ \ \ \ \ \ \ \ \ \ \ \ \ \ \ \ \ \ \ \ \ \ \ \ \ \ \ \ \ \ \ \ \ \ \ \ \ \ \ \ \ \ \ \ \ \  \ \ \ \ \ \ \ \ \ \ \ \ \ \ \ \ \ \  \ \ \ \ \ \ \ \ \ \ \ \ \ \ \ \ \ \ \ \ \ \ \ \ \ \ \ \ \ \ \ \ \  \ \ \ \ \Box$$
We call $d_M(x,y)$ the \emph{ pseudo-distance} of any two points $x$ and $y$ on $M$.
By means of  the property (\ref{PropertyOfSchwarzian}) of Schwarzian derivative and the fact that the projective parameter is invariant under fractional transformations, the pseudo-distance $d_M$ is projectively invariant.

 \bp\label{Prop;3}
Let $(M,F)$ be a Finsler space.
\begin{itemize}

 \item [(a)] If the geodesic  $f : I \rightarrow M $ is projective, then
 \begin{equation}\label{e23}
\rho(a , b) \geq d_M (f(a) , f(b)), \ \ \ \ a,b \in I.
 \end{equation}
 \item [(b)] If $\delta_M$ is any pseudo-distance on $M$ with the property
  $$\rho(a , b) \geq \delta_M(f(a) , f(b)), \ \ \ \ a,b \in I,$$
   and for all projective maps $f: I \rightarrow M$, then
  \begin{equation}\label{e24}
 \delta_M(x , y)\leq d_M(x , y), \ \ \ \ \ \  x,y \in M.
  \end{equation}
  \end{itemize}
  \begin{proof}
(a) By definition $d_M$ is supposed to be the infimum of $L(\alpha)$ for all chain $\alpha$  and actually $f$ is one of them.\\
(b) For $x,y \in M$ consider an arbitrary  chain of projective segments $\alpha$, satisfying
  $x = x_0,...,x_k = y$,   $a_1,b_1,...,a_k,b_k \in I$, and projective maps $f_1,...,f_k$, $f_i:I \rightarrow M$, such that
  $$f_i(a_i) = x_{i-1} \ \ \ , \ \ \ f_i(b_i) = x_i.$$
   We have by assumption
  $$L(\alpha) = \Sigma \rho(a_i , b_i) \geq \Sigma \delta_{_M} (f(a_i) , f(b_i)).$$
  So for an arbitrary chain $\alpha$, the triangle inequality  for the pseudo-distance $\delta_M$ leads to
  $$L(\alpha) \geq \delta_{_M} (f(a_1) , f(b_k))=\delta_{_M}(x,y).$$
   Therefor $\delta_M(x , y)$ is a lower bound for $L(\alpha)$ and $inf_{\alpha} \ \ L(\alpha) \geq \delta_M(x , y)$. Finally we have $d_{_M}(x,y )\geq \delta_{_M}(x , y)$. This completes the proof.
  \end{proof}
% $$\ \ \ \ \ \ \ \ \ \ \ \ \  \ \ \ \ \ \ \ \ \ \ \ \ \ \ \ \ \ \ \ \ \ \ \ \ \ \ \ \ \ \ \ \ \ \ \ \ \ \ \ \ \ \ \ \ \ \ \  \ \ \ \ \ \ \ \ \ \ \ \ \ \ \ \ \ \  \ \ \ \ \ \ \ \ \ \ \ \ \ \ \ \ \ \ \ \ \ \ \ \ \ \ \ \ \ \ \ \ \  \ \ \ \ \Box$$
  \ep
  \subsection{Proof of the Schwarz' lemma on Finsler Spaces}
  Let $ds_{_I}=2\frac{ dx }{1-x^2}$ be the first fundamental  form related to the Poincar\'{e} metric  on the open interval $I$, and $ds_{_M}=\sqrt {g_{ij}(x , dx)dx^i dx^j}$. Denote by  $\tilde{f}$  the natural lift of a projective map ${f}$ to the tangent bundle $TM$.
 % \setcounter{thm}{0}
%\bt
%  Let $(M,F)$ be a Finsler space for which the Ricci tensor satisfies
%  \begin{equation}\label{e25}
% (Ric)_{ij} \leq -c^2g_{ij},
 % \end{equation}
 % as matrices, for a positive constant $c$.
 % For every projective map $f: I \rightarrow M $, consider its natural lift to $TM$;
  %\begin{eqnarray*}
 %\tilde{f}: I\rightarrow TM \\
%\ \ \ \ t\rightarrow (f(t) , \frac{df}{dt})
 % \end{eqnarray*}
 % Then we have
  %\begin{equation}\label{e26}
 % {\tilde{f}}^*(ds^2_M) \leq \frac{(n-1)k^2}{4c^2} ds_I^2.
  %\end{equation}
%\et
  %\begin{equation}\label{e27}
   % ds_I = \frac{1}{k} (\frac{\mid dx \mid}{1-u^2} + \frac{u dx}{1-u^2}).
    % \end{equation}
   Now we are in a position to prove the Schwarz' lemma in Finsler geometry.

{ \it Proof of Theorem \ref{Th;1}.}
   %Since the arc length parameter of the curve $C:=(x^i(t))$ on the Finsler manifold $(M , F)$ is defined by
    Let $f : I\rightarrow M$ be an arbitrary projective map.
  We denote the projective  and  arc-length parameters of $f$ by ``$u$"  and  ``$s$", respectively.
   Let us put $h = \frac{\tilde{f}^*(ds_{_M})}{ds_{_I}}$.
  In order to find an upper bound for $h$ in the open interval $I$, first we assume that $h$ attains its maximum in the interior of $I$. By means of $ds_{_I}=2\frac{ du }{1-u^2}$, we have
     $h = \frac{1}{2} (1-u^2)\frac{ds}{du}.$
     Thus     \begin{equation}\label{Eq; ln h}
   \frac{d \ln h}{du}= \frac{-2u}{1-u^2}+\frac{s^{''}}{s^{'}}.
    \end{equation}
    At the maximum point of $h$, $\frac{d \ln h}{du}$ vanishes and we have
     \begin{equation}\label{e29}
   \frac{s^{''}}{s^{'}} =\frac{2u}{1-u^2}.
       \end{equation}
       The second derivative of (\ref{Eq; ln h}) yields
       $$\frac{d^2\ln h}{du^2} = \frac{s^{'''}s^{'} - (s^{''})^2}{(s^{'})^2} + \frac{-2(1 - u^2) - 4u^2}{(1 - u^2)^2} $$
       $$=\frac{s^{'''}}{s^{'}} - (\frac{s^{''}}{s^{'}})^2 - 2\frac{1 + u^2}{(1 - u^2)^2 } $$
       $$= \{s , u\} + \frac{1}{2}(\frac{s^{''}}{s^{'}})^2 - 2\frac{1 + u^2}{(1- u^2)^2}.$$
       By virtue of  (\ref{schwarzian}), the parameters $u$ and $s$ satisfy
       $\{u , s\} = - \{s , u\} (\frac{du}{ds})^2.$
       Applying (\ref{e29}), we have
       \be\label{dln}
        \frac{d^2\ln h}{du^2}= \frac{-2}{(1- u^2)^2}- \{u,s\} (\frac{ds}{du})^2.
        \ee
       At the maximum point of $h$, the second derivative should be negative, $\frac{d^2\ln h}{du^2} \leq 0$.  On the other hand, by means of
          $\{u , s\} = \frac{2}{n-1}(Ric)_{ij} \frac{dx^i}{ds} \frac{dx^j}{ds},$
          and  the assumption (\ref{shwarz condition}), we get
           $\{u , s\} \leq \frac{-2c^2}{n-1}g_{ij} \frac{dx^i}{ds} \frac{dx^j}{ds}.$
           For the arc-length parameter $s$, we have $g_{ij} \frac{dx^i}{ds} \frac{dx^j}{ds} = 1$.  Therefore
            \begin{equation}\label{e31}
          \{u , s\} \leq \frac{-2c^2}{n-1}< 0.
             \end{equation}
           Considering the above property, (\ref{dln}) reads
        \begin{align*}
        & \frac{-2}{(1- u^2)^2}- \{u,s\} (\frac{ds}{du})^2\leq 0,\\
         & (1-u^2)^2(\frac{ds}{du})^2\leq-\frac{2}{\{u,s\}}.
         \end{align*}
         By definitions of $h$ and $\{u , s\}$, the left hand side  and the right hand side are equal to $4h^2$ and $-\frac{(n-1)}{Ric_{ij}\frac{dx^i}{ds}\frac{dx^j}{ds}}$, respectively.  By  the assumption (\ref{shwarz condition}), the right hand side is bounded above by $(n-1)/c^2$. As claimed, this  happens at the maximum point of $h$. Therefore, for any $u$ in $I$, (\ref{e26}) holds well. This completes the proof when $h$ attains
its maximum in $I$. In general, we consider  a positive number $r<1$ and $I_r=\{-r<u<r\}$ with Poincar\'{e} metric $4 r^2\frac{du^2}{(r^2-u^2)^2}$. The function $h_r=\frac{\tilde{f}^* ds_{_M}}{{ds_{_I}}_r}$ vanishes on the boundary of the interval $I_r$. Therefore it takes its maximum in the interior of $I_r$. Applying the above discussion  to $h_r$, we have $4h^2_r\leq (n-1)/c^2$. To complete the proof, we let $r\rightarrow 1$.
 \hspace{\stretch{1}}$\Box$\\
 \begin{corollary}
          Let $(M,F)$ be a Finsler space for which the Ricci tensor satisfies
  \begin{equation*}
   (Ric)_{ij} \leq -c^2g_{ij},
  \end{equation*}
  as matrices, for a positive constant $c$. Let $d_F(. , .)$ be the distance induced by $F$, then for every projective map $f:I \rightarrow M$, $d_F$ is bounded above by the Poincar\'e distance $\rho$, that is
   \begin{equation}\label{e32}
\rho(a , b) \geq \frac{2 c}{\sqrt{n - 1} } d_F(f(a) , f(b)).
 \end{equation}
 \end{corollary}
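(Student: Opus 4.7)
The plan is to derive this corollary directly from Theorem \ref{Th;1} by integrating its pointwise Schwarz-type inequality along a Poincaré geodesic in $I$. First I would take square roots in (\ref{e26}) to obtain the scalar one-form inequality
\begin{equation*}
\tilde{f}^{*}(ds_{_M}) \,\leq\, \frac{\sqrt{n-1}}{2c}\, ds_{_I}
\end{equation*}
at every point of $I$. This passage to $ds$ from $ds^{2}$ is legitimate because both sides are non-negative.

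Next I would fix $a,b \in I$ and integrate this inequality along the straight segment from $a$ to $b$. On the right-hand side the integral evaluates to $\frac{\sqrt{n-1}}{2c}\,\rho(a,b)$, because the Poincaré distance (\ref{e12}) on the one-dimensional interval $I$ is by definition the integral of $ds_{_I}=2\,du/(1-u^{2})$ along the only connected path between $a$ and $b$. On the left-hand side the integral is the Finsler arc length of the image curve $f\vert_{[a,b]}$ in $(M,F)$, since $\tilde{f}$ is by construction the natural lift of $f$ to $TM$ used to pull back the Finsler length element.

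Finally, by the very definition (\ref{distance}) of the Finsler distance $d_{F}$ as the infimum of Finsler lengths over all piecewise $C^{\infty}$ curves joining two points, the length of $f\vert_{[a,b]}$ is an upper bound for $d_{F}(f(a),f(b))$. Chaining the three estimates yields
\begin{equation*}
d_{F}(f(a),f(b)) \,\leq\, \frac{\sqrt{n-1}}{2c}\,\rho(a,b),
\end{equation*}
which rearranges at once to the claimed inequality (\ref{e32}). I do not expect any real obstacle here; the argument is a straightforward integration of a pointwise bound, and the only point requiring a brief justification is the identification of $\int_{a}^{b} ds_{_I}$ with $\rho(a,b)$, which is immediate from the one-dimensionality of $I$.
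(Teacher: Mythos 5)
Your proposal is correct and follows essentially the same route as the paper: take square roots in (\ref{e26}) to get $\frac{2c}{\sqrt{n-1}}\,ds \leq ds_{_I}$ and integrate from $a$ to $b$, using the infimum definition of $d_F$ to bound $d_F(f(a),f(b))$ by the Finsler length of the image curve. The paper states this more tersely, but your added justifications (identifying the integral of $ds_{_I}$ with $\rho(a,b)$ and invoking the definition of $d_F$) are exactly the steps it leaves implicit.
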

\begin{proof}
By means of Theorem \ref{Th;1} we have
 $$(ds)^2 \leq \frac{ (n - 1)}{4c^2}ds_I^2,$$
 that is $\frac{2 c}{\sqrt{n - 1} } ds \leq ds_{_I}.$
 By integration we obtain (\ref{e32}), what is claimed.
\end{proof}
 %$$\int_a^b \frac{2\sqrt{2} c}{\sqrt{n - 1} k} ds \leq \int_a^b ds_I,$$
 %$$d_F(f(a) , f(b)) \frac{2\sqrt{2} c}{\sqrt{n - 1} k} \leq D_f(a , b).$$

   % \setcounter{thm}{0}
%  \bt
   %Let $(M,F)$ be a Finsler space for which the Ricci tensor satisfies
  %\begin{equation}\label{e25}
 %(Ric)_{ij} \leq -c^2g_{ij}.
  %\end{equation}
  %as matrices, for a positive constant $c$. Then the pseudo-distance $d_M$ is a distance.
 % \et
Now we are in a position to prove that the pseudo-distance $d_M$ is a distance on $M$.

 \textit{Proof of Theorem \ref{Th;2}.}
To establish the proof we have only to show that if  $d_M(x , y) = 0$ then $x = y$.
  By Proposition \ref{Prop;3} and the above corollary we get
  $$ d_F(x , y) \frac{2 c}{\sqrt{n - 1}} \leq d_M(x , y).$$
If  $d_M(x , y) = 0$ then $d_F(x , y) = 0$ and $x = y$. Thus the pseudo-distance $d_M$ is a distance. This completes the proof.
 \hspace{\stretch{1}}$\Box$\\

 As a conclusion we have the following two Theorems.
\bt\label{Th;3}
Let $(M,F)$ be a connected complete Finsler space of positive semi-definite  Ricci tensor. Then the intrinsic projectively invariant pseudo-distance is trivial, that is $d_M=0$.
\et
% \subsection{Proof of Theorem \ref{Th;3}}
This result is a generalization of a theorem in Riemannian geometry, cf. \cite{KS}.
To prove  Theorem \ref{Th;3}, we need the following  two Lemmas.
\begin{lemma}\label{Lem;1}
Let $(M,F)$ be a complete Finsler space and $x_0$ and $x_1$  two points on $M$. If there is a geodesic $x(u)$ with projective parameter $u$, $-\infty <u <+ \infty$, such that $x_0=x(u_0)$ and $x_1=x(u_1)$ for some $u_0$ and $u_1$ in $\mathbb{R}$ then
$d_M(x_0,x_1)=0.$
\end{lemma}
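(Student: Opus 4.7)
\textbf{Proof plan for Lemma~\ref{Lem;1}.}
The plan is to exhibit, for each sufficiently large integer $n$, a single projective segment $f_n : I \to M$ together with points $a_n, b_n \in I$ satisfying $f_n(a_n) = x_0$ and $f_n(b_n) = x_1$, in such a way that the Poincar\'e distance $\rho(a_n, b_n)$ tends to $0$. Since the one-link chain $\alpha_n$ consisting of $f_n$ alone is admissible in the definition \eqref{e19} of $d_M$, this will give $d_M(x_0, x_1) \le \rho(a_n, b_n) \to 0$, whence the conclusion.

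The central observation is that the projective parameter is determined only up to linear fractional transformations, by \eqref{PropertyOfSchwarzian}. Concretely, for each positive integer $n$ I would set
\[
f_n : I \longrightarrow M, \qquad f_n(v) := x(nv).
\]
The map $f_n$ traces out the same geodesic as $x(u)$, but reparametrized by $v = u/n$. Since $u$ is a projective parameter and the rescaling $u \mapsto u/n$ is a linear fractional transformation with $ad-bc = 1/n \neq 0$, the parameter $v$ is itself a projective parameter along the image curve; hence $f_n$ is a projective map in the sense introduced before Proposition~3. Choosing $a_n := u_0/n$ and $b_n := u_1/n$, the hypothesis that $x(u)$ is defined for all $u \in \mathbb{R}$ guarantees that $f_n$ makes sense on all of $I$ as soon as $n$ is large enough that $a_n, b_n \in (-1,1)$; and by construction $f_n(a_n) = x_0$, $f_n(b_n) = x_1$.

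It then remains to apply the explicit Poincar\'e formula \eqref{e12}:
\[
\rho(a_n, b_n) = \left| \ln \frac{(1-a_n)(1+b_n)}{(1-b_n)(1+a_n)} \right|,
\]
and observe that $a_n, b_n \to 0$ as $n \to \infty$, so $\rho(a_n, b_n) \to 0$. Combined with the single-segment chain estimate $d_M(x_0,x_1) \le \rho(a_n,b_n)$, this forces $d_M(x_0,x_1) = 0$.

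The only point requiring genuine care, which I regard as the main conceptual step, is the verification that $v = u/n$ qualifies as a projective parameter for $f_n$: this rests on the fractional-linear invariance \eqref{PropertyOfSchwarzian} of the Schwarzian derivative appearing in \eqref{e16}, which ensures that linear rescalings preserve the defining equation of the projective parameter. The role of completeness, implicit in the assumption $-\infty < u < +\infty$, is exactly what allows one to rescale an arbitrarily long projective segment into the bounded interval $I$.
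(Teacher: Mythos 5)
Your proposal is correct. The paper itself does not prove Lemma~\ref{Lem;1} (it defers the details to the reference \cite{BS}, remarking only that the proof is a straightforward application of the Schwarz lemma), but your rescaling argument --- using the fractional-linear invariance \eqref{PropertyOfSchwarzian} to view $v=u/n$ as a projective parameter and thereby compress $x_0$ and $x_1$ into points $a_n,b_n\to 0$ of $I$ with $\rho(a_n,b_n)\to 0$ --- is exactly the technique the paper employs in its own example for part (iii) of the proposition on $d_M$ (the maps $f_n(\pi)=(n\pi,0)$ in $\mathbb{R}^2$), and it is the standard Kobayashi-style proof of this statement; the hypothesis that $u$ ranges over all of $\mathbb{R}$ is precisely what makes $f_n$ well defined on all of $I$.
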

%\begin{proof}
%Let us denote the linear equation of the segment passing through the points $(u_0,-1/2)$ and $(u_1,1/2)$  by $\widehat{u}={u}/({u_1-u_0})-{1}/{2}({u_1+u_0)}/(u_1-u_0)$. $\widehat{u}$ is a linear transformation of $u$ and is also a projective parameter. We have $-\frac{1}{2}<\widehat{u}<\frac{1}{2}$ whenever $u_0<u<u_1$.
%Next, we consider  the chain $\alpha$ of projective maps,  $a_n$ and $b_n$ where
%$$f_n=x(n\widehat{u}), \qquad a_n=-\frac{1}{2n}, \qquad b_n=\frac{1}{2n}.$$
%Note that $f_n(-\frac{1}{2n})=x(n(-\frac{1}{2}n))=x(-\frac{1}{2})=x(u_0)$ and $$\rho(-\frac{1}{2n},\frac{1}{2n}) = \mid \ln \frac{(1+\frac{1}{2n})(1+\frac{1}{2n})}{(1-\frac{1}{2n})(1-\frac{1}{2n})}\mid.$$ Considering $n$ sufficiently large, we have $d_M(x_0,x_1)=inf L(\alpha)=0$. This completes the proof.
%\end{proof}
\begin{lemma}\label{Lem;2}
Let $(M,F)$ be a complete Finsler space and $x(s)$  a geodesic with arc-length parameter $-\infty<s<\infty$. Assume that there exists a (finite or infinite) sequence of open intervals $I_i=(a_i,b_i)$, $i=0,\pm 1,\pm 2,...$, such that;\\ i) $a_{i+1}\leq b_i$, $lim_{i\rightarrow -\infty} a_i=-\infty$ and $lim_{i\rightarrow \infty}b_i=+\infty$  Such that $\bigcup_i \overline{I_i}=(-\infty,+\infty)$;\\ ii) in each interval $I_i=(a_i,b_i)$, a projective parameter ``$u$" moves from $-\infty$ to $+\infty$ whenever $t$ moves from $a_i$ to $b_i$. Then, for any pair of points $x_0$ and $x_1$ on this geodesic, we have
$$d_M(x_0,x_1)=0.$$
\end{lemma}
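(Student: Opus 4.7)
My strategy is to extend Lemma~\ref{Lem;1} from a single interval to the whole geodesic by chaining with the triangle inequality for $d_M$. Within each open $I_i$ the projective parameter $u$ ranges over all of $\mathbb{R}$, so Lemma~\ref{Lem;1} immediately yields $d_M(x(s),x(s'))=0$ for every $s,s'\in I_i$. Consequently, for $x_0=x(\sigma_0)$ and $x_1=x(\sigma_1)$ with $\sigma_0\le\sigma_1$, it will suffice to construct a finite chain $\sigma_0=t_0<t_1<\cdots<t_N=\sigma_1$ in which each consecutive pair $(t_{j-1},t_j)$ lies in a common open $I_{k_j}$, since then
\[
d_M(x_0,x_1)\;\le\;\sum_{j=1}^{N}d_M\bigl(x(t_{j-1}),x(t_j)\bigr)\;=\;0.
\]

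The chain is produced by compactness. Since $[\sigma_0,\sigma_1]$ is compact and $\bigcup_i\overline{I_i}=\mathbb{R}$, I extract a finite consecutive subcover $\overline{I_{k_1}},\dots,\overline{I_{k_N}}$ of $[\sigma_0,\sigma_1]$. In the generic case $a_{k_{j+1}}<b_{k_j}$ the overlap $I_{k_j}\cap I_{k_{j+1}}$ is nonempty and I choose $t_j$ inside it, which automatically places the consecutive pair in a single open interval where Lemma~\ref{Lem;1} applies.

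The main obstacle I expect is the degenerate case when $a_{k_{j+1}}=b_{k_j}$: the two open intervals $I_{k_j},I_{k_{j+1}}$ then meet only at a single point $\tau$ lying in neither of them, and the analogous issue arises when one of $\sigma_0,\sigma_1$ itself happens to be such a transition point. At such $\tau$ the given projective parameter is singular, so Lemma~\ref{Lem;1} does not directly reach $x(\tau)$. To bridge the transition I use the invariance of projective parameters under linear fractional transformations, property~(\ref{PropertyOfSchwarzian}): replacing $u$ by $\tilde u=(au+b)/(cu+d)$ with $c\neq 0$ produces a new projective parameter whose value at $\tau$ is $a/c\in\mathbb{R}$ and whose pole set is shifted off the original transitions. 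Hence $\tau$ becomes interior to a new open interval on which $\tilde u$ again runs over $(-\infty,+\infty)$, and Lemma~\ref{Lem;1} applied with $\tilde u$ provides chain links that cross $\tau$. Splicing these into the chain produced above and invoking the triangle inequality of $d_M$ then closes the argument.
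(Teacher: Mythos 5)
First, a caveat: the paper does not actually prove this lemma --- it states only that the proof ``is a straightforward application of Schwarz's Lemma and will appear in details in \cite{BS}'' --- so there is no in-paper argument to compare yours against line by line. Your strategy (apply Lemma~\ref{Lem;1} inside each $I_i$, chain with the triangle inequality, and bridge the places where consecutive intervals merely touch by a fractional linear change of projective parameter) is the natural one and is essentially sound. One small remark on the easy part: a cover of $[\sigma_0,\sigma_1]$ by the \emph{closed} sets $\overline{I_i}$ does not yield a finite subcover by compactness alone; finiteness comes instead from the hypotheses $a_{i+1}\le b_i$, $a_i\to-\infty$, $b_i\to+\infty$, which force finitely many consecutive intervals to span $[\sigma_0,\sigma_1]$.

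The real gap is the sentence ``Hence $\tau$ becomes interior to a new open interval on which $\tilde u$ again runs over $(-\infty,+\infty)$.'' In the only nontrivial case, $a_{k_{j+1}}=b_{k_j}=\tau$, this is precisely the content of the lemma, and you assert it rather than prove it: a M\"obius change makes $\tilde u$ finite at $\tau$, but Lemma~\ref{Lem;1} needs $\tilde u$ to be \emph{surjective onto} $\mathbb{R}$ on an open interval containing $\tau$, which requires poles of $\tilde u$ on \emph{both} sides of $\tau$. To close this, recall that along the whole geodesic every projective parameter is a ratio of solutions of the linear equation (\ref{a}). Write the parameter on $I_{k_j}$ as $y_1/y_2$; hypothesis (ii) forces $y_2$ to vanish at $a_{k_j}$ and at $\tau$, and (since the parameter on $I_{k_{j+1}}$ also blows up at $\tau$, and two solutions vanishing at the same point are proportional) at $b_{k_{j+1}}$ as well, these being consecutive zeros of $y_2$. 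For $c\neq 0$ the solution $cy_1+dy_2$ is independent of $y_2$, hence nonzero at $\tau$, and by the Sturm separation theorem it has exactly one zero $p^-\in(a_{k_j},\tau)$ and exactly one zero $p^+\in(\tau,b_{k_{j+1}})$, and these are consecutive. Since $\tilde u\,'=(ad-bc)W/(cy_1+dy_2)^2$ with $W$ the constant nonzero Wronskian, $\tilde u$ is strictly monotone on $(p^-,p^+)$ and blows up with opposite signs at its endpoints, so it maps $(p^-,p^+)$ onto all of $\mathbb{R}$. This interval contains $\tau$ and overlaps both $I_{k_j}$ and $I_{k_{j+1}}$, so Lemma~\ref{Lem;1} applies to it and your splicing argument goes through. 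With this justification supplied, your proof is complete.
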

%\begin{proof}
%By Lemma  \ref{Lem;1}, the distance between any two points in the same interval $I_i$ is zero.  Two consecutive open intervals $I_i$ and $I_{i+1}$ have either a point  as a boundary point or an interval in common. In either case, given $\epsilon >0$, there exist  the points $s_i$ and $s_{i+1}$ in $I_i$ and $I_{i+1}$ respectively such that $d(x(s_i),x(s_{i+1})) <\epsilon$. This completes the proof.
%\end{proof}

Proof of these two Lemmas is an straight forward application of Schwarz's Lemma and will appear in details in \cite{BS}.
 It is well known for $Q(s)\geq 0$ in equation (\ref{a}),  the open intervals $I_i$ in Lemma \ref{Lem;2} can be constructed through some elementary lemmas in  \cite{BS} where we don't  mention the lemmas here to avoid  repetition.  Construction of such open intervals $I_i$ completes the proof of  Theorem \ref{Th;3}.
\subsection{Parallel negative-definite Ricci tensor}
%We recall the following theorem which will be used in the sequel. \\

\bt\label{Th;4}
Let $(M,F)$ be a connected (complete) Finsler space of negative-definite Ricci tensor  parallel with respect to  Berwald or Chern connection. Then  the intrinsic projectively invariant pseudo-distance $d_M$, is a (complete) distance.
\et

 Proof of Theorem \ref{Th;4} is based on the Finsler structure defined by the negative Ricci tensor $Ric_{ij}$ as follows. Let $\hat{F}(x,y)=\sqrt{-Ric_{ij}(x,y)y^iy^j}$, it will be shown in  \cite{BS}  that  $\hat{F}$ is a Finsler structure on  $M$.
 % More precisely;
%i) by definition $\hat{F}$ is $C^{\infty}$ on the entire slit tangent bundle $TM_0$; ii) since $Ric_{ij}(x,y)$  is 0-homogeneous, $\hat{F}(x , \lambda y) = \lambda \hat{F}(x , y)$ for all $\lambda > 0$; iii) again, according to 0-homogeneity of the Ricci tensor, through  straightforward calculation we have $\hat{g}_{ij} = {[1/2\hat{F}^2]}_{y^iy^j}= -Ric_{ij}(x,y)$. The Ricci tensor is supposed to be negative-definite thus the Hessian matrix $(\hat{g}_{ij})$ is positive -definite.
It can be shown  that the spray coefficients of  $\hat{F}$ and $F$ are equal, that is ${\hat{G}}^i=G^i$. It shows, according to   Theorem \ref{Th;2},  $d_M$ is a (complete) distance.
 Therefor  $Ric_{ij}$ is the Ricci tensor of $\hat{F}$ too. According to Theorem \ref{Th;2}, $d_{M}$ is a (complete) distance.
  %This completes the proof of Theorem \ref{Th;4}.

 Faculty of Mathematics and Computer Science,\\ Amirkabir University of Technology, \\ bidabad@aut.ac.ir\\

 Faculty of  Science, Islamic Azad University of Shiraz,\\ Sadra, Shiraz, 71993, Iran. \\sepasi@iaushiraz.ac.ir

\end{document}